\def\H{{\mathbb H}}
\begin{document}

\setcounter{secnumdepth}{3}
\setcounter{tocdepth}{2}

\newtheorem{definition}{Definition}[section]
\newtheorem{lemma}[definition]{Lemma}
\newtheorem{sublemma}[definition]{Sublemma}
\newtheorem{corollary}[definition]{Corollary}
\newtheorem{proposition}[definition]{Proposition}
\newtheorem{theorem}[definition]{Theorem}

\newtheorem{remark}[definition]{Remark}
\newtheorem{example}[definition]{Example}

\newcommand{\mf}{\mathfrak}
\newcommand{\mb}{\mathbb}
\newcommand{\ol}{\overline}
\newcommand{\la}{\langle}
\newcommand{\ra}{\rangle}
\newcommand{\hess}{\mathrm{Hess}}

\newtheorem{thmprime}{Theorem}
\renewcommand{\thethmprime}{1.\arabic{thmprime}\textquotesingle}

\renewcommand{\P}{\mathcal{P}}

% \AfterTheoremHeaderChar{.}

\newtheorem{Alphatheorem}{Theorem}
\renewcommand{\theAlphatheorem}{\Alph{Alphatheorem}}

\newtheorem{Alphatheoremprime}{Theorem}
\renewcommand{\theAlphatheoremprime}{\Alph{Alphatheoremprime}\textquotesingle}

\theoremstyle{definition}

\theoremstyle{plain}
\newtheorem*{flp}{Flip algorithm}
\newtheorem{clm}{Claim}

\newcommand{\EM}{\ensuremath}
\newcommand{\norm}[1]{\EM{\left\| #1 \right\|}}

\newcommand{\modul}[1]{\left| #1\right|}

\def\co{\colon\thinspace}
\def\I{{\mathcal I}}
\def\N{{\mathbb N}}
\def\R{{\mathbb R}}
\def\Z{{\mathbb Z}}
\def\Sph{{\mathbb S}}
\def\Tor{{\mathbb T}}
\def\Disk{{\mathbb D}}
\def\Fl{{\mathbb F}}

\def\H{{\mathbb H}}
\def\RP{{\mathbb R}{\mathrm{P}}}
\def\dS{{\mathrm d}{\mathbb{S}}}

\def\phi{\varphi}
\def\epsilon{\varepsilon}
\def\V{{\mathcal V}}
\def\E{{\mathbb E}}
\def\F{{\mathcal F}}
\def\C{{\mathcal C}}
\def\K{{\mathcal K}}

\renewcommand{\span}{\operatorname{span}}
\newcommand{\ang}{\operatorname{ang}}
\newcommand{\coarea}{\operatorname{coarea}}
\newcommand{\cone}{\operatorname{cone}}
\newcommand{\pr}{\operatorname{pr}}
\newcommand{\vol}{\operatorname{vol}}
\newcommand{\covol}{\operatorname{covol}}
\newcommand{\st}{\operatorname{st}}
\newcommand{\ost}{\operatorname{st}^\circ}
\newcommand{\inn}{\operatorname{int}}
\renewcommand{\d}{\operatorname{d}}

\def\dist{\mathrm{dist\,}}
\def\diam{\mathrm{diam\,}}

\def\M{{\mathcal M}}
\def\Mt{{\mathcal M}_{\mathrm{tr}}}
\def\T{{\mathcal T}}
\def\Vt{V_{\mathrm{tr}}}

\begin{footnotesize}
\title{Polygons of the Lorentzian plane and spherical simplexes}
\author{Fran\c{c}ois Fillastre\\
University of Cergy-Pontoise\\ UMR CNRS 8088\\Departement of Mathematics\\
F-95000 Cergy-Pontoise\\ FRANCE \\
francois.fillastre@u-cergy.fr}
%\thanks{} 

\date{(v3) \today}
\end{footnotesize}
%\keywords{Lorentzian geometry, mixed area, spherical orthoscheme.}
\maketitle

\section{Introduction}

It is a common occurence that sets of geometric objects themselves
carry some kind of geometric structure. A classical example for this is
the set of all conformal structures on a given compact surface. Riemann
discovered that this set, the ``space'' of conformal structures,
 can be described by a finite number of parameters
called {\it moduli}. The corresponding {\it parameter} or {\it moduli space} turned
out to be a very interesting geometric object in itself
whose study is the subject of Teichm\"uller theory.

On a more basic level, one can consider spaces consisting of objects
of elementary geometry like (shapes of) polyhedra in Euclidean space.
Thurston \cite{T} found that in this case, the corresponding moduli space
carries the structure of a complex hyperbolic manifold,  and he established
a link with sets of triangulations of the 2-sphere.

Bavard and Ghys \cite{BG92} considered sets of polygons in the Euclidean
plane. Fix a compact convex polygon $P$ with $n\geq3$ edges and let
${\cal P}(P)$ be the space of convex polygons with $n$ edges parallel to
those of $P$. The elements of ${\cal P}(P)$ are then determined by the distances of the lines
containing the edges from the origin, which gives $n$ parameters. Following [Thu98],
Bavard and Ghys proved that on the space of parameters, the area
of the polygons in ${\cal P}(P)$ is a quadratic form, and they computed its signature.
The kernel of the corresponding bilinear form has dimension 2
(due to the fact that area is invariant under translations), and there is only one
positive direction. Hence, up to the kernel, one gets a Lorentzian signature.
As a consequence, the set of elements of ${\cal P}(P)$ with area equal to one,
considered up to translations, can be identified with a subset of the
hyperbolic space $\mathbb{H}^{n-3}$. This subset turns out to be
a hyperbolic convex polyhedron of a special kind: it is a simplex with the property
that each hyperplane containing a facet meets orthogonally all but two hyperplanes containing
the other facets. Such simplices are called {\it hyperbolic orthoschemes}.
The dihedral angles of the orthoscheme can be computed from the angles of
$P$, and \cite{BG92} contains a list of convex polygons $P$ such that the orthoscheme
obtained from $P$ is of Coxeter type, i.e. has acute angles of the form $\pi/k$,
$k\in\mathbb{N}$. This list  was previously known \cite{ImHof2,ImHof1}, but it appeared it
was incomplete \cite{F}.

In this paper we consider a class of non-compact plane polygons whose moduli
space is a {\it spherical} orthoscheme. These polygons, the $t$-convex polygons
introduced in Section~\ref{sec:tcvxe}, are best described not in terms of the Euclidean geometry
on $\mathbb{R}^2$, but as subsets of the Lorentz plane. Instead of the area we will
consider a suitably defined coarea that turns out to be a positive definite
quadratic form on the parameter space, an $n$-dimensional vector space.
Restricting to coarea one we obtain a subset of the unit sphere in that
parameter space, and this subset is shown to be a spherical orthoscheme.
Moreover, any spherical orthoschem can be obtained in this way.

It is amusing that in \cite{BG92} Euclidean polygons led to Lorentz metrics
and hyperbolic orthoschemes, while in the present paper Lorentzian
polygons give rise to Euclidean metrics and spherical orthoschemes.
The author does not know if there is a way to obtain {\it Euclidean}
orthoschemes from spaces of plane convex polygons.

\section{Background on the Lorentz plane}

Recall that the {\it Lorentz plane} is $\mathbb{R}^2$ equipped with the
{\it Lorentz inner product}, that is the bilinear form $\langle \binom{x_1}{x_2} ,\binom{y_1}{y_2} \rangle_1=x_1y_1-x_2y_2. $
A non-zero vector $v$ can be \emph{space-like}
($\langle v,v\rangle_1>0$), \emph{time-like} ($\langle v,v\rangle_1<0$) or \emph{light-like}
($\langle v,v\rangle_1=0$).
The set of time-like vectors 
has two connected components, and we denote the upper one, the set of 
\emph{future} time-like vectors, by
$$\F:=\{x\in\R^2\vert \langle x,x\rangle_1<0, x_2>0 \}. $$
The set of unit future time-like vectors is
$$\H:=\{x\in \R^2| \langle x,x\rangle_1=-1, x_2>0 \},$$
which will be the analog of the circle in the Euclidean plane, see Figure~\ref{fig:futcon}.
In higher dimension, the generalization of $\H$ together with its induced metric is a model of the hyperbolic space, 
in the same way that
the unit sphere for the Euclidean metric with its induced metric is a model of the round sphere.
 In particular, if the angle between two unit vectors in the Euclidean plane is seen as
the distance between the two corresponding points on the circle, 
 the  \emph{(Lorentzian) angle} between two future time-like vectors $x$ and $y$ is the unique $\varphi> 0$ such that
\begin{equation}\label{eq:angle}
 \cosh \varphi=-\frac{ \langle x,y \rangle_1}{\sqrt{\langle x,x\rangle_1\langle y,y\rangle_1}}
\end{equation}
(see \cite[(3.1.7)]{Rat06} for the existence of $\varphi$).
The angle $\varphi$ is the distance on $\H$ (for the induced metric) between $x/\sqrt{-\langle x,x\rangle_1}$ 
and $y/\sqrt{-\langle y,y\rangle_1}$.

\begin{figure}[h]
 \centering
 \includegraphics[scale=0.3]{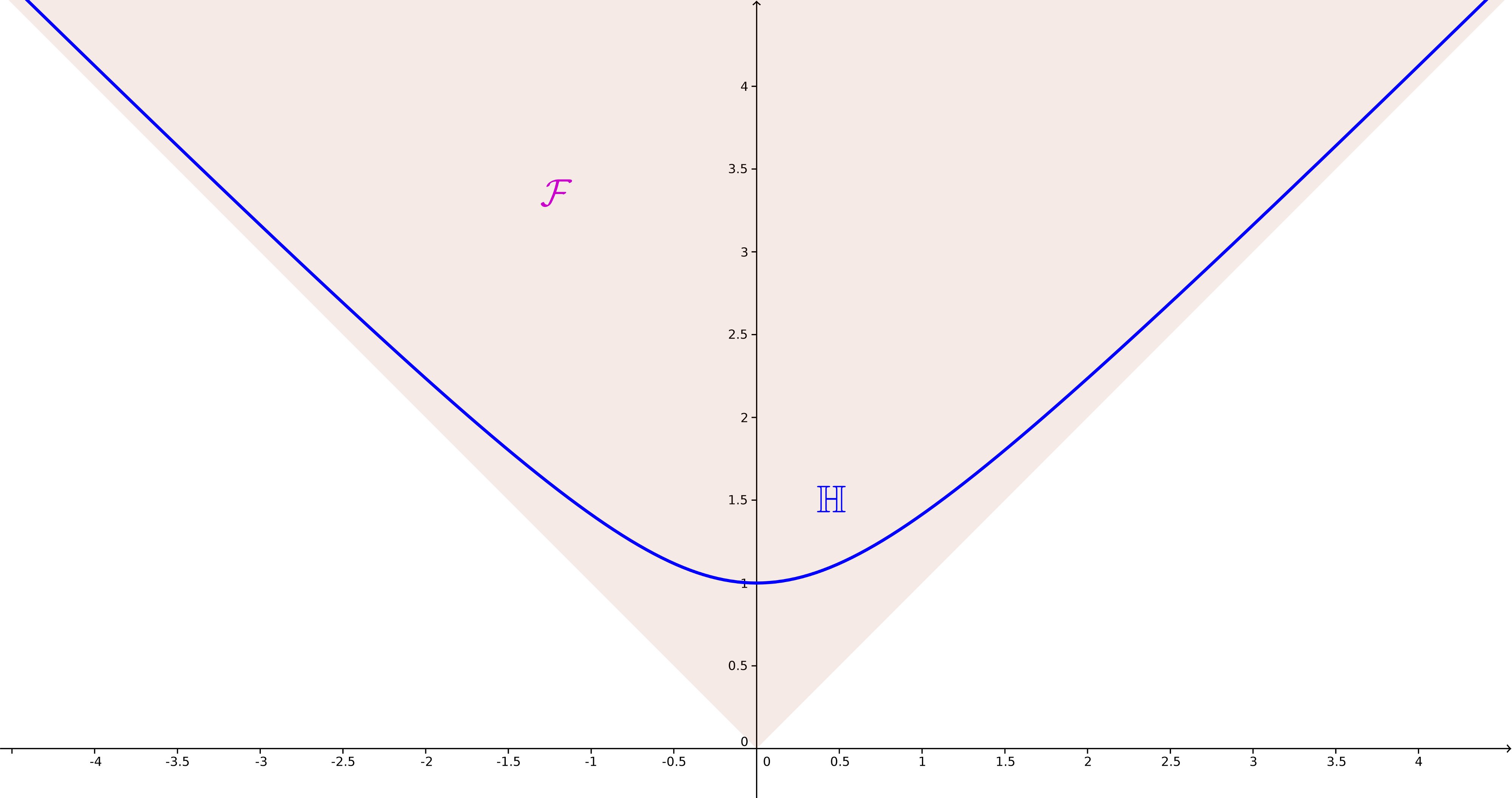}  
 \caption{The cone $\F$ of future time-like vectors and 
 the curve $\H$ of unit future time-like vectors.\label{fig:futcon}
 } 
   \end{figure}

$\F$  and $\H$ are globally invariant under the action of the linear isometries of the Lorentzian plane, called 
\emph{hyperbolic translations}:
\begin{equation}\label{eq:hyp trans}H_{t}:= \left(
    \begin{array}{cc}
       \cosh t  & \sinh t \\
   \sinh t & \cosh t
    \end{array}
\right), t\in\R.\end{equation}
In all the paper we fix a positive $t$. 
We denote by $<H_t>$ the free group
spanned by $H_t$.

%%%%%%%%%%%%%%%%%%%%%%%%%%%%%%%%%%%%%%%%%%%%%%%%%%%%%%%%%%%%%%%%%%%%%%%%%%%%%%%%%%%%%%%%%%%%%%%%%%%%%%%%%%%%%%%%%%%%%%%
%%%%%%%%%%%%%%%%%%%%%%%%%%%%%%%%%%%%%%%%%%%%%%%%%%%%%%%%%%%%%%%%%%%%%%%%%%%%%%%%%%%%%%%%%%%%%%%%%%%%%%%%%%%%%%%%%%%%%%%

\section{$t$-convex polygons}\label{sec:tcvxe}

Let $a\in\F$. We will denote by
\begin{equation*}\label{eq def affine}
a^{\bot}:=\{x\in\R^{2}| \langle x,a\rangle_1=\langle a,a\rangle_1 \}
\end{equation*}
the line
that passes through $a$ and is parallel to the $1$-dimensional subspace
orthogonal to $a$ under $\langle \cdot,\cdot\rangle_1$.

\begin{definition}\label{def:gen}
Let $(\eta_1,\ldots,\eta_n)$, $n\geq 1$, be pairwise distinct unit future time-like vectors in the Lorentzian plane
(i.e.~$\eta_i\in\H$),
 and let $h_1,\ldots,h_n$ be positive numbers. 
 A \emph{$t$-convex  polygon} $P$ is  the intersection of the half-planes
bounded by the lines
$$(H_t^k (h_i\eta_i))^{\bot}, \forall k\in\mathbb{Z}, \forall i=1,\ldots,n.$$
The half-planes are chosen such that the vectors $\eta_i$ are inward pointing.
The positive numbers $h_i$ are the \emph{support numbers} of $P$. 
\end{definition}

A $t$-convex polygon is called \emph{elementary} if it is defined by a single
future time-like vector $\eta$ and a positive number $h$. Note that 
for each $k$, $(H_t^k (h\eta))^{\bot}$ is tangent
to $h\H$ (the upper hyperbola with radius $h$).
Hence a $t$-convex polygon is the intersection of
a finite number of elementary $t$-convex polygons. 

\begin{example}\label{ex}{\rm
Let $t_0=\sinh^{-1}(1)$, so 
 $$H_{t_0}:= \left(
    \begin{array}{cc}
       \sqrt{2}  & 1 \\
   1 & \sqrt{2}
    \end{array}
\right).$$ Let us denote by $P_1$ the elementary $t_0$-convex polygon defined by the vector
$\eta= \binom{0}{1} $ and the number $h=1$, see Figure~\ref{a}.
The elementary $t_0$-convex polygon $P_2$ of Figure~\ref{b} is obtained 
from $p_1$ by a slightly change of $\eta$ and $h$. Their intersection forms
the $t_0$-convex polygon of Figure~\ref{c}. 
}
\end{example}
 \begin{figure}[htp]
   \centering
  \subfloat[A part of the $t_0$-convex polygon $P_1$. For the Lorentzian metric, all the edges have equal length and all the 
 angles between edges are equal.]{\label{a}\includegraphics[scale=0.3]{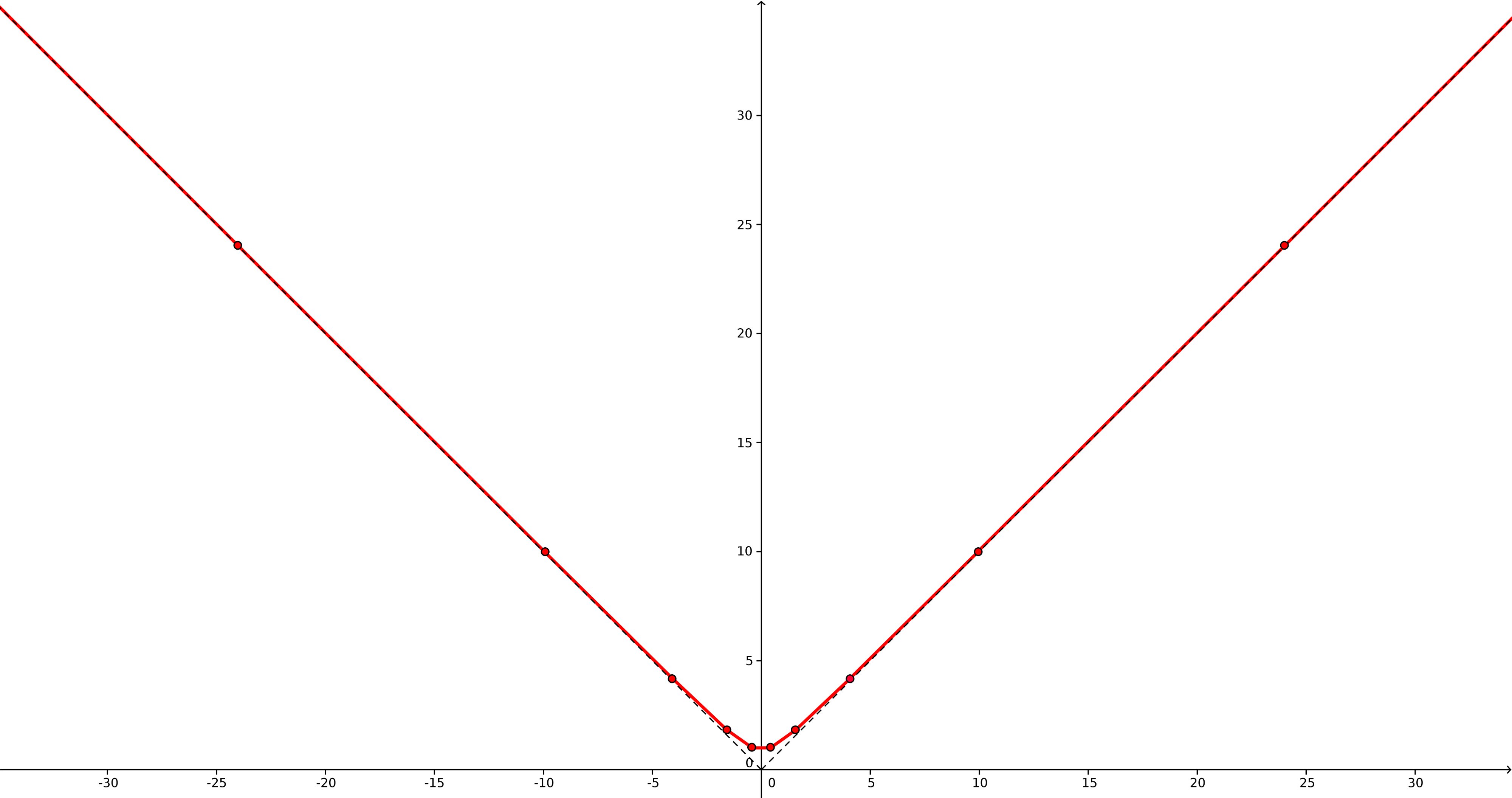} }   \\     
   \subfloat[A part of the $t_0$-convex polygon $P_2$. For the Lorentzian metric, all the edges have equal length and all the 
 angles between edges are equal.]{\label{b}\includegraphics[scale=0.3]{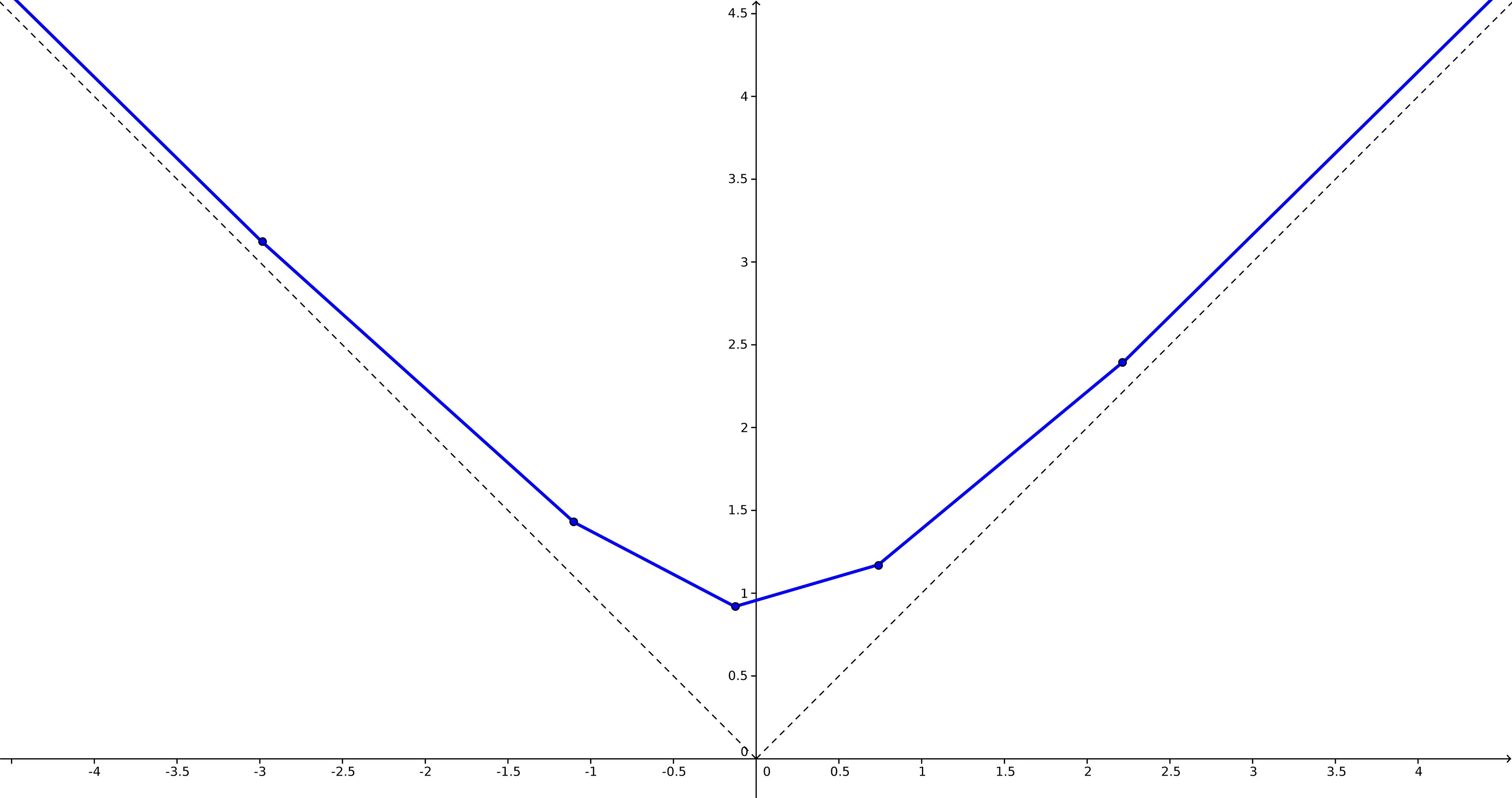}  }\\
   \subfloat[A part of the $t_0$-convex polygon obtained as the intersection of $P_1$ 
   and $P_2$.]{\label{c}\includegraphics[scale=0.3]{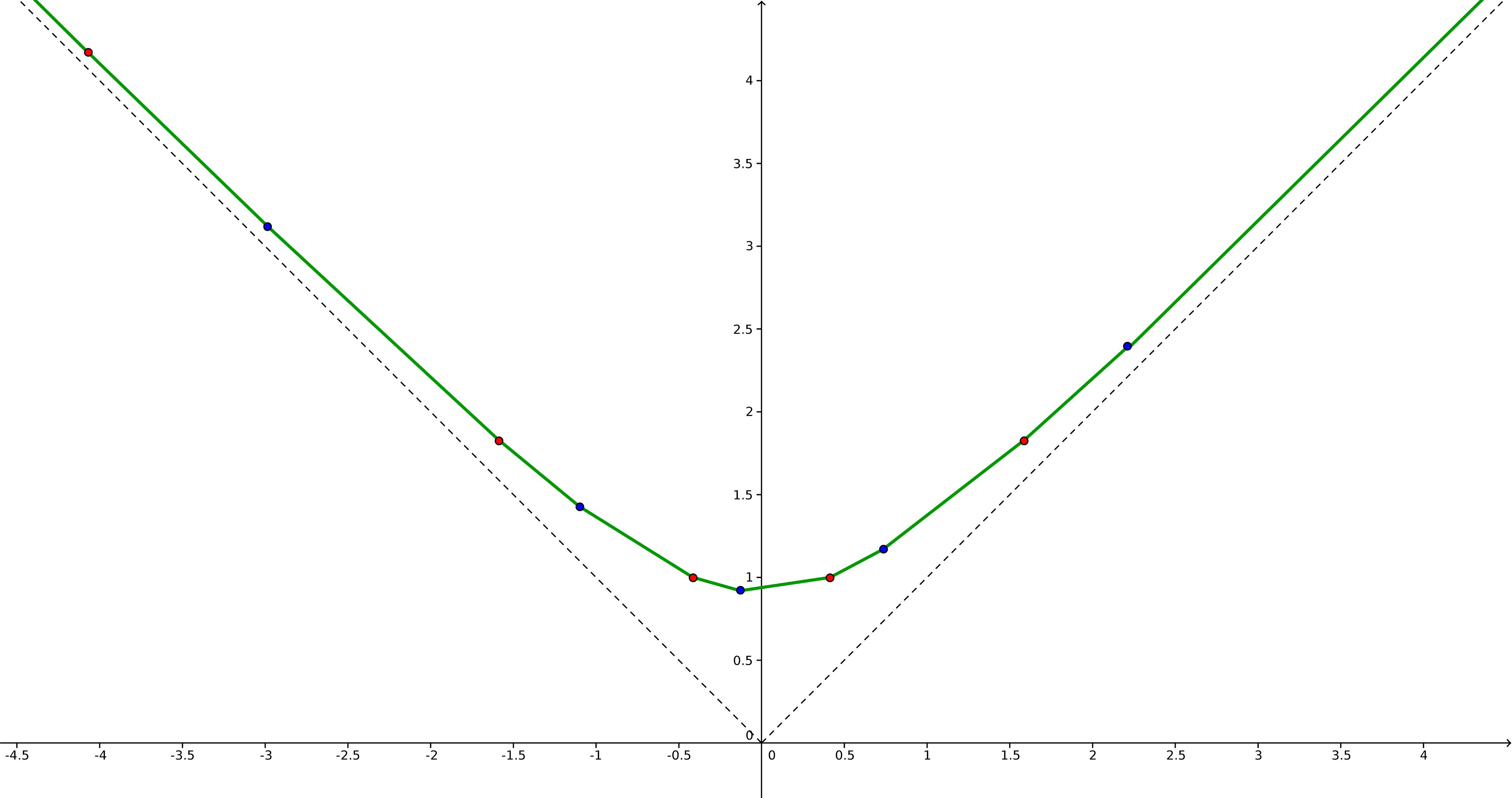}}
   \caption{To Example~\ref{ex}.}
   \label{fig:polygon}
 \end{figure}

  \begin{lemma}
  A $t$-convex polygon $P$ is a proper convex subset of $\R^2$ contained in $\F$,
  bounded by a polygonal line with a countable number of sides, and globally 
  invariant under the action of
$<H_t>$.
  \end{lemma}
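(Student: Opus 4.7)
The plan is to verify the four stated properties (convex, contained in $\F$, bounded by a countable polygonal line, $\langle H_t\rangle$-invariant) separately, leaving the inclusion in $\F$ for last since it carries all the actual content.

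Convexity is immediate: $P$ is by definition an intersection of half-planes, each convex. Invariance under $\langle H_t\rangle$ is almost as quick. The family of defining lines $\{(H_t^k(h_i\eta_i))^\bot : k\in\Z,\ i=1,\ldots,n\}$ is permuted by $H_t$, which acts by $k\mapsto k+1$ on the index, and $H_t$ is a linear isometry of the Lorentz plane preserving $\F$, so it also preserves the ``inward pointing'' choice of half-plane. Hence $H_t(P)=P$, and therefore $\gamma(P)=P$ for every $\gamma\in\langle H_t\rangle$. Once $P\subset\F$ is established, properness of $P$ as a convex subset of $\R^2$ will follow from $\F\neq\R^2$, and $P$ is non-empty because, e.g., $\binom{0}{x_2}\in P$ for any $x_2\geq\max_i h_i$ (direct computation using $\langle \binom{0}{x_2},H_t^k\eta_i\rangle_1=-x_2\cosh\theta_{i,k}$).

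The core of the argument is the inclusion $P\subset\F$. Since $P$ equals the intersection of the $n$ elementary $t$-convex polygons $P_i$ associated with the pairs $(h_i,\eta_i)$, it suffices to show each $P_i\subset\F$. Unwinding the definition, $P_i=\{x\in\R^2 : \langle x,H_t^k\eta_i\rangle_1\leq -h_i\text{ for all }k\in\Z\}$. I would fix $p\notin\F$ and produce some $k$ for which this inequality fails. The crucial input is that the rescaled orbit $\{H_t^k\eta_i\}_{k\in\Z}$ accumulates on the two light-like rays bounding $\F$ as $k\to\pm\infty$. If $p$ lies in the closed past cone or equals $0$, then $\langle p,\eta_i\rangle_1\geq 0>-h_i$ already at $k=0$, by the reverse Cauchy--Schwarz inequality for future time-like vectors. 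If $p=\binom{x_1}{x_2}$ is space-like, writing $H_t^k\eta_i=\binom{\sinh\theta_k}{\cosh\theta_k}$ shows that $\langle p,H_t^k\eta_i\rangle_1$ has leading exponential behaviour proportional to $x_1-x_2$ as $k\to+\infty$ and to $-(x_1+x_2)$ as $k\to-\infty$; the space-like condition $x_1^2>x_2^2$ forces at least one of these coefficients to be strictly positive, giving a $k$ with $\langle p,H_t^k\eta_i\rangle_1$ arbitrarily large. If $p$ is non-zero light-like, $\langle p,H_t^k\eta_i\rangle_1$ tends to $0$ as $k$ goes to one of $\pm\infty$, again defeating the strict bound $\leq -h_i<0$.

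For the boundary, only countably many lines appear in the definition of $P$, so $\partial P$ is a countable union of closed segments carried by those lines. Combined with $\langle H_t\rangle$-invariance and the fact that $P$ is unbounded in the time-like direction (it contains the whole $\langle H_t\rangle$-orbit of any interior point), these segments fit together into a connected polygonal curve with countably, in fact infinitely, many edges. I expect the main obstacle to be the space-like case of the containment step: the guiding idea---send $k$ to the light ray that witnesses the space-like direction of $p$---is conceptually simple, but it is the only place in the argument where the discrete structure of the orbit of $H_t$ has to be genuinely analysed, via the explicit parametrisation above and the dichotomy $x_1^2>x_2^2$. Every other step is routine bookkeeping.
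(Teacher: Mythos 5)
Your proof is correct, and for the one substantive point---that an elementary $t$-convex polygon is contained in $\F$---you take a genuinely different route from the paper. The paper argues via the vertices: the intersection $x$ of two tangent lines $u^{\bot}$, $v^{\bot}$ (with $u=H_t^k\eta$, $v=H_t^{k'}\eta$) satisfies $\langle x,u\rangle_1=\langle x,v\rangle_1=-1$, hence is orthogonal to the space-like vector $u-v$ and is therefore time-like, and future since the tangent lines avoid the past cone; containment of the whole polygon is then declared ``easy to deduce.'' You instead argue by contraposition, showing that every $p\notin\F$ violates some defining inequality $\langle p,H_t^k\eta\rangle_1\le -h$ by analysing the asymptotics of $\langle p,H_t^k\eta\rangle_1$ as $k\to\pm\infty$: positivity on the closed past cone, exponential growth with a strictly positive coefficient ($x_1-x_2$ or $-(x_1+x_2)$) in the space-like case, and decay to $0$ from below in the future light-like case. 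Your version is arguably more self-contained: the paper's final deduction implicitly uses that the polygon is the convex hull of its future time-like vertices plus a recession cone contained in $\overline{\F}$, whereas your case analysis settles the inclusion directly from the defining inequalities. The remaining properties (convexity, invariance, properness, non-emptiness, countability of the sides) are treated as routine in both texts; like the paper, you leave the precise polygonal structure of the boundary at the level of a sketch, which is consistent with the lemma's role.
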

  \begin{proof}
  The group invariance is clear from the definition. 
  $P$ is the intersection of a finite number of elementary $t$-convex polygons, 
  so we only have  to check the other properties in the elementary case. 
  Actually the only non-immediate one is that an elementary $t$-convex polygon
  is contained in $\F$. 
   Let us consider an elementary $t$-convex polygon made from a single future time-like vector $\eta$
   and a number $h$.
   Without
loss of generality, consider that $h=1$. 
Let $u=H^k_t(\eta)$ and $v=H^{k'}_t(\eta)$  and let $x$ be the intersection between
$u^{\bot}$ and $v^{\bot}$. As $\langle x,u \rangle_1=\langle x,v\rangle_1=-1$,  $x$ is orthogonal to
$u-v$, which is a space-like vector (compute its norm with the help of \eqref{eq:angle}). Hence $x$ is time-like, and as $u^{\bot}$ and $v^{\bot}$ never meet the 
past cone, $x$ is future. It is easy to deduce that the $t$-convex polygon is contained in $\F$.
  \end{proof}

 Note that as a convex surface, a $t$-convex polygon can also be
 a $t'$-convex polygon (for example it is also invariant under the action of any subgroup of $<H_t>$), but 
we will only consider the action of a given $<H_t>$. 

Given a  $t$-convex polygon $P$, we will  require that 
the set of elementary $t$-convex polygons such that their intersection gives $P$ is minimal, i.e~each
$\eta_i$ is the inward unit normal of a genuine edge $e_i$ of $P$.
The edge at the left (resp. right) of $e_i$ is denoted by $e_{i-1}$ (resp. $e_{i+1}$).
Let $p_i$ be the foot of the perpendicular from the origin to the line containing $e_i$ 
(in particular, $p_i=h_i\eta_i$).
Let $p_{ii+1}$ be the vertex between
$e_i$ and $e_{i+1}$. We denote by $h_{ii+1}$ (resp. $h_{ii-1}$) the signed distance
from $p_i$ to $p_{ii+1}$ (resp. from  $p_i$ to $p_{i-1i}$): it is non negative if $p_i$ is on the same side of $e_{i+1}$ (resp. $e_{i-1}$) as $P$.
The angle between $\eta_i$ and $\eta_{i+1}$ is denoted by $\varphi_i$.
See Figure~\ref{fig:notations}.

  \begin{figure}
\centering
\includegraphics[scale=0.4]{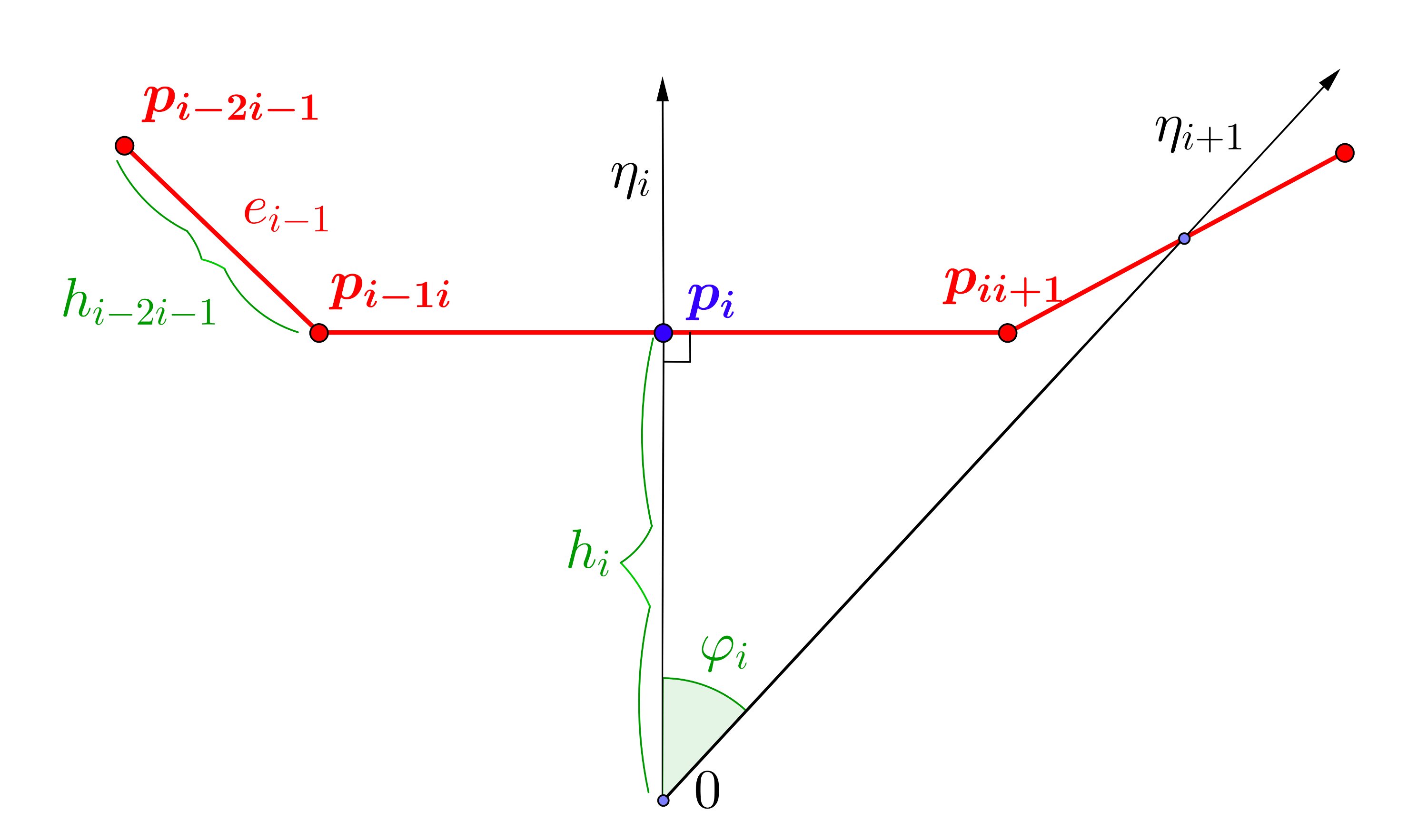}  
\caption{Notations for a $t$-convex polygon.
\label{fig:notations}
} 
  \end{figure}

\begin{lemma}\label{lem:longmink}
With the notations introduced above,
\begin{equation}\label{eq: supp num mink}
 h_{ii+1}=\frac{h_i\cosh \varphi_{i}-h_{i+1}}{\sinh \varphi_{i}},  
h_{ii-1}=\frac{h_i\cosh \varphi_{i-1}-h_{i-1}}{\sinh \varphi_{i-1}}.
\end{equation}
\end{lemma}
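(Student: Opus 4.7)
The plan is to intersect the two supporting lines analytically. First, I would rewrite the defining equation of each edge-line in the more tractable form
\[
(h_i\eta_i)^\bot=\{x\in\R^2\mid \langle x,\eta_i\rangle_1=-h_i\},
\]
which follows from $\langle\eta_i,\eta_i\rangle_1=-1$. Thus the vertex $p_{ii+1}$ is determined by the two linear conditions $\langle p_{ii+1},\eta_i\rangle_1=-h_i$ and $\langle p_{ii+1},\eta_{i+1}\rangle_1=-h_{i+1}$.

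Next I would parametrize the line containing $e_i$ by $p_i+s\,v_i$ where $v_i$ is the unit space-like vector Lorentz-orthogonal to $\eta_i$, oriented from $p_i$ toward $p_{ii+1}$. Writing $\eta_i=(\sinh\theta_i,\cosh\theta_i)$ we may take $v_i=(\cosh\theta_i,\sinh\theta_i)$, and the signed parameter $s$ is then by definition $h_{ii+1}$, since $v_i$ is a unit space-like vector and the edge is space-like. Writing $\eta_{i+1}=(\sinh\theta_{i+1},\cosh\theta_{i+1})$ with $\theta_{i+1}=\theta_i+\varphi_i$ (so that $\varphi_i>0$ is the Lorentzian angle of \eqref{eq:angle}), a direct computation yields
\[
\langle\eta_i,\eta_{i+1}\rangle_1=-\cosh\varphi_i,\qquad \langle v_i,\eta_{i+1}\rangle_1=\sinh\varphi_i.
\]
Imposing the second vertex equation $\langle h_i\eta_i+h_{ii+1}\,v_i,\eta_{i+1}\rangle_1=-h_{i+1}$ gives
\[
-h_i\cosh\varphi_i+h_{ii+1}\sinh\varphi_i=-h_{i+1},
\]
from which the first formula of \eqref{eq: supp num mink} is immediate.

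For the second formula I would argue by symmetry: repeat the computation with the neighbouring edge $e_{i-1}$, parametrizing the line of $e_i$ in the opposite direction $-v_i$ (this is the direction from $p_i$ toward $p_{i-1\,i}$, and accounts for $\langle v_i,\eta_{i-1}\rangle_1=-\sinh\varphi_{i-1}$). The same algebra gives the stated formula for $h_{ii-1}$.

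The only real subtlety is the sign convention: one must check that the direction $+v_i$ (resp.\ $-v_i$) does coincide with the orientation used to define the signed distance $h_{ii+1}$ (resp.\ $h_{ii-1}$). This is the step that deserves care, but it reduces to noting that $h_{ii+1}\geq 0$ precisely when $p_i$ lies on the same side of $e_{i+1}$ as $P$, which is exactly the case when our parameter $s$, solving the above linear equation, is nonnegative; both signs then agree by inspection of the formula.
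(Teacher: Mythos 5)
Your proposal is correct and follows essentially the same route as the paper: both compute the vertex $p_{ii+1}$ as the intersection of the two supporting lines $\langle x,\eta_i\rangle_1=-h_i$ and $\langle x,\eta_{i+1}\rangle_1=-h_{i+1}$ (the paper normalizes $\eta_i=\binom{0}{1}$ by an isometry, which is your $\theta_i=0$, so that your parameter $s$ is exactly the first coordinate $(p_{ii+1})_1$ computed there), and both verify that the sign of the resulting expression matches the convention that $h_{ii+1}\geq 0$ iff $p_i$ lies on the $P$-side of $e_{i+1}$. The only cosmetic wrinkle is that you first describe $v_i$ as "oriented toward $p_{ii+1}$" and then fix it as $(\cosh\theta_i,\sinh\theta_i)$; your closing paragraph resolves this correctly, so the argument stands.
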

\begin{proof}
By definition, $h_{ii+1}$ is non negative when
$\langle p_i-p_{i+1},\eta_{i+1}\rangle_1\leq 0$,
i.e.$$-(h_{i+1}-h_i\cosh \varphi_{i})\geq 0.$$
Hence
$$h_{ii+1}= -\frac{h_{i+1}-h_i\cosh \varphi_{i}}{|h_{i+1}-h_i\cosh \varphi_{i}|}\sqrt{\langle p_{ii+1}-p_i,p_{ii+1}-p_i\rangle_1}. $$
Up to an orientation and time orientation preserving linear isometry, one can take $\eta_i=\left(0 \atop 1 \right)$.
In particular $p_i=\left(0 \atop h_i \right)$ and $(p_{ii+1})_2=h_i$  hence  
$$\langle p_{ii+1}-p_i,p_{ii+1}-p_i\rangle_1=(p_{ii+1})_1^2.$$
We also have
$\eta_{i+1}=\left(\sinh \varphi_{i} \atop \cosh  \varphi_{i} \right)$, and as
$\langle p_{ii+1},\eta_{i+1}\rangle_1=-h_{i+1}$
we get $$ (p_{ii+1})_1=\frac{-h_{i+1}+h_i\cosh\varphi_{i}}{\sinh \varphi_{i}}.$$
The proof for $h_{ii-1}$ is similar, considering $\eta_{i-1}=\left(-\sinh \varphi_{i} \atop \cosh  \varphi_{i} \right)$.
\end{proof}

%%%%%%%%%%%%%%%%%%%%%%%%%%%%%%%%%%%%%%%%%

\section{The cone of support vectors}

Let $P$ be a $t$-convex polygon. Choose an edge and denote its inward unit normal by $\eta_1$.
We denote the inward unit  normal of the edge on the right by $\eta_2$, and so on until 
$\eta_{n+1}=H_t(\eta_1)$. The edges with normals $\eta_1,\ldots,\eta_n$ are the 
\emph{fundamental edges} of $P$. Note that with this labeling, if $\varphi_i$ is the angle between $\eta_i$ 
and $\eta_{i+1}$, we have
\begin{equation}\label{eq:somme angle}
\varphi_{1}+\varphi_{2}+\cdots+\varphi_{n}=t.
\end{equation}

The  number $h_i(P)$ is the support number of the edge with normal $\eta_i$, 
and $h(P)=(h_1(P),\ldots,h_n(P))$ is the \emph{support vector} of $P$.
So $P$ is identified with a vector of $\R^n$, in such a way that 
$\eta_1,\ldots,\eta_n$ are in bijection with the standard basis of $\mathbb{R}^n$.
Of course $P$ is  uniquely determined by its support vector.

\begin{definition} Choose $\eta\in\H$ and
 let $\varphi_{1},\varphi_{2},\cdots,\varphi_{n}$ be positive numbers satisfying 
 \eqref{eq:somme angle}.  The \emph{cone of support vectors} 
 $\overline{\P}(\varphi_{1},\varphi_{2},\ldots,\varphi_{n})$
 is the set of support vectors of $t$-convex polygons with inward unit normals
 $\eta_1=\eta$, $\eta_{i+1}=H_{\varphi_i}(\eta_i)$.
\end{definition}

A priori the definition of $\overline{\P}$ depends not only on the angles 
 $\varphi_i$ but also on the choice of $\eta$. Actually choosing another starting $\eta'\in\H$,
 the hyperbolic translation from $\eta$ to $\eta'$ gives a linear isomorphism between the
 two resulting sets of support vectors. Hence $\overline{\P}(\varphi_{1},\varphi_{2},\ldots,\varphi_{n})$
 could be defined as the set of $t$-convex polygons with ordered angles $(\varphi_{1},\varphi_{2},\ldots,\varphi_{n})$
 up to hyperbolic translations.
 Note also that if $s$ is a cyclic permutation, then $\overline{\P}(\varphi_{s(1)},\ldots,\varphi_{s(n)})$
is the same as $\overline{\P}(\varphi_{1},\ldots,\varphi_{n})$.
 
 It is possible to prove that 
   $\overline{\P}(\varphi_{1},\varphi_{2},\ldots,\varphi_{n})$ is a  convex polyhedral cone
   with non-empty interior
   in $\R^n$, but this will be easier after a suitable metrization of $\R^n$, that is the subject of the next section.

%%%%%%%%%%%%%%%%%%%%%%%%%%%%%%%%%%%%%%%%%

\section{Coarea}\label{sec:coarea}

\begin{definition} Let $P\in \overline{\P}(\varphi_{1},\varphi_{2},\ldots,\varphi_{n})$.
 The \emph{coarea} of $P$ is 
$$\coarea(P)=\frac{1}{2}\sum_{i=1}^n h_i(P) \ell_i(P)$$
where the sum is on the fundamental edges, and  $\ell_i(P)=h_{ii-1}(P)+h_{ii+1}(P)$ is the length of the 
$i$th fundamental edge (hence  positive).
\end{definition}

 Geometrically
$\coarea(P)$ is the area (in the sense of the Lebesgue measure)
of a fundamental domain for the action of $H_t$ on the complement of $P$ in $\F$.
The main point is that hyperbolic translations \eqref{eq:hyp trans} have determinant $1$, so they preserve the
area, which is then independent of the choice of the fundamental domain, see Figure~\ref{fig:polygonaire}.
Moreover the area of a triangle with a space-like edge $e$ of length $l$ and $0$ as a vertex has
area $\frac{1}{2}lh$, if $h$ is the Lorentzian distance between $0$ and the line containing $e$.
(To see this, perform a hyperbolic translation such that $e$ is horizontal and compute the area.)
Note that the coarea depends not only on the polygonal line $P$ but also on the group 
$<H_t>$, so it would be more precise to speak about ``$t$-coarea'',
but as the group is fixed from the beginning, no confusion is possible. 

\begin{figure}[h]
 \centering
 \includegraphics[scale=0.3]{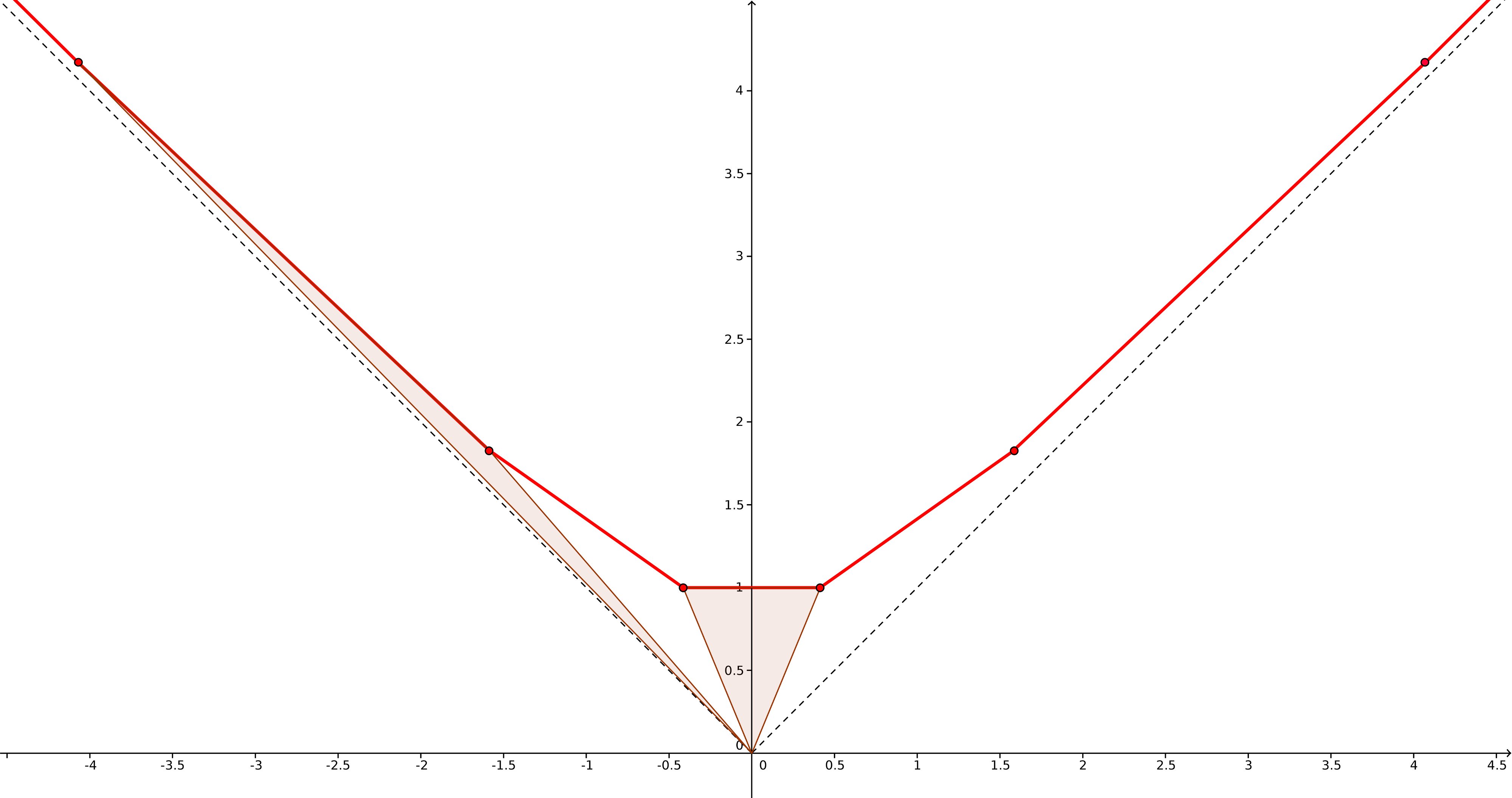}  
 \caption{The two shaded regions have the same area. This area is the coarea of the 
 polygon.
 \label{fig:polygonaire}
 } 
   \end{figure}

For a given cone of support vectors, the coarea can be formally extended 
 to $\R^n$ with the help of \eqref{eq: supp num mink}: for $h\in\R^n$,

$$\coarea(h)=\frac{1}{2}\sum_{i=1}^n h_i \ell_i(h)$$

with

\begin{equation}\label{li}\ell_i(h):=\frac{h_i\cosh \varphi_{i-1}-h_{i-1}}{\sinh \varphi_{i-1}}
+h_i \frac{h_i\cosh \varphi_{i}-h_{i+1}}{\sinh \varphi_{i}}.\end{equation}

If $n=1$, there is only one angle between the unit inward normal $\eta$
and its image under $H_t$, which is equal to $t$, and 
$\coarea(h)=h^2\frac{\cosh t -1}{\sinh t}. $

If $n\geq 2$, we introduce the \emph{mixed-coarea}
$$\coarea(h,k)=\frac{1}{2}\sum_{i=1}^n h_i \frac{k_i\cosh \varphi_{i-1}-k_{i-1}}{\sinh \varphi_{i-1}}
+h_i \frac{k_i\cosh \varphi_{i}-k_{i+1}}{\sinh \varphi_{i}},$$
which is the polarization of the $\coarea$. Actually, it is
 clearly a bilinear form, and
\renewcommand{\arraystretch}{2}
\begin{equation}\label{eq: mui}
\coarea(\eta_k,\eta_j)=\left\{
    \begin{array}{ccc}
        0 & \mbox{if } & 2\leq \vert j-k\vert\leq n+1 \\
      \displaystyle{  -\frac{1}{2}\frac{1}{\sinh\varphi_{k-1}}} & \mbox{if} & j=k-1 \\
	 \displaystyle{-\frac{1}{2}\frac{1}{\sinh\varphi_{k}}}& \mbox{if} & j=k+1 \\
 \displaystyle{\frac{1}{2}\left(\frac{\cosh \varphi_{k-1}}{\sinh\varphi_{k-1}}+\frac{\cosh\varphi_k}{\sinh\varphi_{k}}\right)}& \mbox{if} & j=k 
    \end{array}
\right.
\end{equation}
so $\coarea$ is symmetric. We also obtain the following key result.
\begin{proposition}
 The symmetric bilinear form $\coarea$ is  positive definite. 
\end{proposition}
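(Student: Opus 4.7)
The plan is to exhibit $\coarea$ as a manifestly positive quadratic form by rearranging its defining sum. From
\[
 2\coarea(h)=\sum_{i=1}^{n}h_i\!\left(\frac{h_i\cosh\varphi_{i-1}-h_{i-1}}{\sinh\varphi_{i-1}}+\frac{h_i\cosh\varphi_i-h_{i+1}}{\sinh\varphi_i}\right),
\]
I would adopt the cyclic convention $h_0=h_n$, $h_{n+1}=h_1$, $\varphi_0=\varphi_n$, which is forced on support vectors of genuine $t$-convex polygons by $H_t$-invariance and which is the natural convention for the formal extension to $\R^n$. Shifting $i\mapsto i+1$ in the first of the two inner fractions (legal precisely because of cyclicity) merges the two sums into a single cyclic expression
\[
 2\coarea(h)=\sum_{i=1}^{n}\frac{(h_i^{2}+h_{i+1}^{2})\cosh\varphi_i-2h_ih_{i+1}}{\sinh\varphi_i}.
\]

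The second step is a sum-of-squares decomposition. Writing $\cosh\varphi_i-1=2\sinh^2(\varphi_i/2)$ one has
\[
 (h_i^{2}+h_{i+1}^{2})\cosh\varphi_i-2h_ih_{i+1}=(h_i-h_{i+1})^{2}+2(h_i^{2}+h_{i+1}^{2})\sinh^{2}(\varphi_i/2).
\]
Both pieces are non-negative for $\varphi_i>0$, and the second vanishes only when $h_i=h_{i+1}=0$. Hence $\coarea(h)\ge 0$. If $h\neq 0$, then some $h_j\neq 0$, and the summands for $i=j$ and $i=j-1$ (taken cyclically) are strictly positive, so $\coarea(h)>0$, giving positive definiteness.

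The only real obstacle is bookkeeping: one has to confirm that the reindexing is consistent with the formal extension and that the cyclically tridiagonal structure read off from \eqref{eq: mui} (diagonal $\tfrac12(\coth\varphi_{k-1}+\coth\varphi_k)$, adjacent entries $-\tfrac{1}{2\sinh\varphi_{k-1}}$ and $-\tfrac{1}{2\sinh\varphi_k}$) matches the regrouped sum, and to double-check the degenerate cases $n=1$ and $n=2$ where the two halves of the inner parenthesis overlap. A variant that avoids the half-angle identity is strict diagonal dominance: since $\cosh\varphi_i>1$ one has $\coth\varphi_i>1/\sinh\varphi_i$, so every row of the matrix satisfies $A_{kk}>\sum_{j\neq k}|A_{kj}|$, and a real symmetric strictly diagonally dominant matrix with positive diagonal entries is positive definite by Gershgorin's circle theorem.
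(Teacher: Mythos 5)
Your argument is correct, but your primary route is genuinely different from the paper's. The paper's entire proof is the one-line observation you relegate to a ``variant'' at the end: since $\cosh\varphi_k>1$, the matrix \eqref{eq: mui} is symmetric, strictly diagonally dominant with positive diagonal, hence positive definite (cited from Varga rather than derived from Gershgorin, but it is the same standard fact). Your main argument instead reindexes the cyclic sum to get
$2\coarea(h)=\sum_i\bigl((h_i^2+h_{i+1}^2)\cosh\varphi_i-2h_ih_{i+1}\bigr)/\sinh\varphi_i$
and then exhibits each numerator as $(h_i-h_{i+1})^2+2(h_i^2+h_{i+1}^2)\sinh^2(\varphi_i/2)$; the reindexing is legitimate because the indices genuinely are cyclic mod $n$ (the $H_t$-invariance forces $h_{n+1}=h_1$, $h_0=h_n$, $\varphi_0=\varphi_n$, consistent with the cyclically tridiagonal matrix in \eqref{eq: mui}), and the nonvanishing check is right: if $h_j\neq 0$ the $i=j$ term is already strictly positive. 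What your decomposition buys over the paper's argument is an explicit, self-contained certificate of positivity that displays $\coarea$ as a discrete Dirichlet-type energy plus a weighted $\ell^2$ term, and in particular makes the strict positivity transparent without invoking any linear-algebra theorem; what the paper's argument buys is brevity. The degenerate cases you flag are harmless: $n=1$ is handled separately in the paper ($\coarea(h)=h^2(\cosh t-1)/\sinh t>0$), and for $n=2$ the two off-diagonal contributions merge but both dominance and your sum of squares survive unchanged.
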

\begin{proof}
As $\cosh \varphi_{k}>1$, the matrix 
$(\coarea(u_k,u_j))_{kj}$ is  strictly diagonally dominant, and symmetric with positive diagonal entries, hence positive definite, see for example  \cite[1.22]{Var00}.
\end{proof}

The Cauchy--Schwarz inequality 
applied to support vectors of $t$-convex polygons gives the following \emph{reversed Minkowski inequality}:

\begin{corollary} Let $P,Q$ be $t$-convex polygons with parallel edges. Then
 $$\coarea(P,Q)^2\leq \coarea(P)\coarea(Q),$$
with equality if and only if $P$ and $Q$ are homothetic: $\exists \lambda>0, \forall i, h_i(P)=\lambda h_i(Q)$.
\end{corollary}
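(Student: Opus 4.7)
The strategy is to invoke the ordinary Cauchy--Schwarz inequality for the inner product structure that has just been established on $\mathbb{R}^n$. Indeed, the preceding proposition shows that the bilinear form $\coarea(\cdot,\cdot)$ is symmetric and positive definite, so it defines a genuine Euclidean inner product on the ambient space containing the cone of support vectors. Identifying $P$ and $Q$ with their support vectors $h(P), h(Q) \in \mathbb{R}^n$, the Cauchy--Schwarz inequality applied to this inner product reads
\begin{equation*}
\coarea(h(P), h(Q))^2 \leq \coarea(h(P), h(P)) \cdot \coarea(h(Q), h(Q)),
\end{equation*}
which is exactly the desired reversed Minkowski inequality once we recall that $\coarea$ coincides with the polarization of the coarea quadratic form.

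For the equality case, Cauchy--Schwarz for a positive definite inner product gives equality if and only if $h(P)$ and $h(Q)$ are linearly dependent, i.e.\ there exists $\lambda \in \R$ with $h_i(P) = \lambda\, h_i(Q)$ for all $i$. The remaining issue is to rule out $\lambda \leq 0$: but by Definition~\ref{def:gen} the support numbers $h_i(P)$ and $h_i(Q)$ are all strictly positive, so necessarily $\lambda > 0$. This is exactly the notion of homothety stated in the corollary.

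The step I would expect to require the most care is simply confirming that ``Minkowski--type'' equality characterization coming out of Cauchy--Schwarz matches the geometric notion of homothety used in the statement; but because support vectors have positive coordinates, linear dependence with a possibly negative scalar cannot occur, and the argument closes cleanly. No further calculation beyond invoking the previous proposition is required.
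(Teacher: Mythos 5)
Your proof is correct and follows exactly the route the paper intends: the corollary is stated as an immediate application of the Cauchy--Schwarz inequality for the positive definite bilinear form $\coarea$ established in the preceding proposition. Your additional observation that the positivity of the support numbers forces the proportionality constant $\lambda$ to be positive is a worthwhile detail that the paper leaves implicit.
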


%%%%%%%%%%%%%%%%%%%%%%%%%%%%%

\section{Spherical orthoschemes}

 $\overline{\P}(\varphi_{1},\varphi_{2},\ldots,\varphi_{n})$ is clearly a cone in $\R^n$.
 Moreover it is the set of vectors of positive edge lengths, for the edge lengths defined by \eqref{li}. 
 From the definition of the coaera, for $h\in\R^n$, $2 \coarea(\eta_i,h)=\ell_i(h)$, so
 $\eta_i$  is an inward normal vector to the 
facet of $\overline{\P}$ defined by $\ell_i=0$. So $\overline{\P}$ is polyhedral, and it is convex
because the $\eta_i$ form a basis of $\R^n$. 
 Let us denote by $\P(\varphi_{1},\varphi_{2},\ldots,\varphi_{n})$
 the intersection of  $\overline{\P}(\varphi_{1},\varphi_{2},\ldots,\varphi_{n})$
 with the unit sphere of $(\R^n,\coarea)$ (i.e.~the set of support vectors of $t$-convex polygons with
 coarea one). It follows that $\P$ is a spherical simplex.
 If $n=1$, $\P$ is a  point on a line, so from now on assume that $n>1$.

When $n=2$, $\P$ is an arc on the  unit circle with length $\theta$ satisfying
$$\cos\theta=\frac{\sinh\varphi_{2}}{\sinh(\varphi_1+\varphi_2)}.$$

When  $n=3$, $\P$ is a spherical triangle with acute inner angles, whose cosines are 
given by:
\begin{equation}\label{formcos}-\frac{\coarea(\eta_k,\eta_{k+1})}{\sqrt{\coarea(\eta_k,\eta_k)}\sqrt{\coarea(\eta_{k+1},\eta_{k+1})}}=\sqrt{\frac{\sinh \varphi_{k-1}\sinh\varphi_{k+1}}{\sinh(\varphi_{k-1}+\varphi_{k})
\sinh(\varphi_{k}+\varphi_{k+1})}}.\end{equation}

When $n\geq 3$, from \eqref{eq: mui} we see that each facet 
has an acute interior dihedral angle with exactly two other facets, and is orthogonal to the other facets.
Such spherical simplexes are called  \emph{acute spherical orthoschemes}. See \cite[5]{Deb90} for the history and main properties
of these very particular simplexes. Note that there are no spherical Coxeter orthoschemes, because the Coxeter diagram of a 
spherical orthoscheme must be a cycle, and there
is no cycle in the list of Coxeter diagrams of spherical Coxeter simplexes. 
The list can be found for example in \cite{Rat06}.

Let us denote by $U_k$ the line through $p_k$ (so the angle between $U_k$ and $U_{k+1}$ is $\varphi_k$), and by
$\lambda$ the cross ratio $[U_{k-1},U_k,U_{k+1},U_{k+2}]$, namely if $u_{k-1},u_k,u_{k+1},u_{k+2}$
are the intersections of the lines $U_i$ with any line not passing through zero and endowed with coordinates then
(see \cite{Ber})
$$\lambda=[U_{k-1},U_k,U_{k+1},U_{k+2}]=\frac{u_{k+1}-u_{k-1}}{u_{k+1}-u_k}\frac{u_{k+2}-u_k}{u_{k+2}-u_{k-1}}.$$
We have the formula (see \cite{PY})
$$\frac{\sinh \varphi_{k-1}\sinh\varphi_{k+1}}{\sinh(\varphi_{k-1}+\varphi_{k})
\sinh(\varphi_{k}+\varphi_{k+1})}=\frac{\lambda-1}{\lambda}=[U_{k-1},U_{k+2},U_k,U_{k+1}]. $$

From a given  $n$-dimensional  acute spherical orthoscheme $O$    we can find angles (positive real numbers) $(\varphi_{1},\varphi_{2},\ldots,\varphi_{n})$
such that  $\P(\varphi_{1},\varphi_{2},\ldots,\varphi_{n})$ is isometric to $O$.
Let $0<A<1$ be the square of the cosine of an acute dihedral angle of $O$. 
We have first to find  ordered time-like lines $U_1,U_2,U_3,U_4$ such that 
$[U_1,U_2,U_3,U_4]=\frac{1}{1-A}$, i.e.~we have to prove that the cross-ratio of the lines can reach any 
value $>1$. Choose arbitrary distinct ordered time-like $U_1,U_2, U_4$. If $U_3=U_4$ then $[U_1,U_2,U_3,U_4]=1$, and if $U_3=U_2$ then  $[U_1,U_2,U_3,U_4]=+\infty$, so by continuity
any given value $>1$ can be reached for a suitable $U_3$ between $U_2$ and $U_4$. 
$U_1,U_2,U_3,U_4$ give angles $\varphi_1,\varphi_2,\varphi_3$.

Now the other $\varphi_k$ are easily obtained as follows.
Given the next dihedral angle of $O$ (they can be ordered by ordering the unit normals to $O$, see \cite{Deb90}),
the square of its cosine should be equal to
$$\frac{\sinh \varphi_{2}\sinh\varphi_{4}}{\sinh(\varphi_{2}+\varphi_{3})
\sinh(\varphi_{3}+\varphi_{4})}
$$
and $\varphi_2,\varphi_3$ are known, so we get $\varphi_4$. And so on.

%%%%%%%%%%%%%%%%%%%%%
\section{Spherical cone-manifolds}

Let $n>2$ and consider the orthoscheme  $\P=\P(\varphi_1,\ldots,\varphi_n)$.
A facet of $\P$ is isometric to the space of $t$-convex polygons with $\eta_1,\ldots,\hat{\eta_i},\ldots,\eta_n$ ($\hat{\eta_i}$ means that $\eta_i$ is deleted from the list) 
as normals to the fundamental edges. The angles between the normals are 
$\varphi_1,\ldots,\varphi_{i-2},\varphi_{i-1}+\varphi_{i},\varphi_{i+1},\ldots,\varphi_n$.
This orthoscheme is also isometric to a facet of the orthoscheme  $\P'$ obtained
by permuting $\varphi_{i-1}$ and $\varphi_i$ in the list of angles. Hence we can glue $\P$ and $\P'$
isometrically
along this common facet.
We denote by $\C(\varphi_1,\ldots,\varphi_n)$ the $(n-1)$-dimensional spherical cone-manifold obtained by gluing 
in this way all the $(n-1)!$ orthoschemes obtained by permutations of the list  $\varphi_1,\ldots,\varphi_n$, up to cyclic permutations.

When $n=3$, $\C(\varphi_1,\varphi_2,\varphi_3)$ is isometric to a spherical cone-metric on the sphere with
three conical singularities, with cone-angles $<\pi$, obtained by gluing two isometric spherical triangles along corresponding edges.

Let $n\geq 4$. Around the codimension 2 face of $\C$ isometric to
$$N:=\mathcal{C}(\varphi_1,\ldots,\varphi_k+\varphi_{k+1},\ldots,\varphi_j+\varphi_{j+1},\ldots,\varphi_{n+3}) $$
 are glued four orthoschemes, corresponding to the four ways of ordering $(\varphi_k,\varphi_{k+1})$ and $(\varphi_j,\varphi_{j+1}).$
 As the dihedral angle of each orthoscheme at such codimension $2$ face is $\pi/2$, the total angle around $N$ 
in $\C$ is $2\pi$. Hence metrically $N$ is actually not a singular set.
Around the codimension 2 face of $\C$ isometric to
$$S:=\C(\varphi_1,\ldots,\varphi_k+\varphi_{k+1}+\varphi_{k+2},\ldots,\varphi_{n+3}) $$
 are glued six orthoschemes corresponding to the six ways of ordering $(\varphi_k,\varphi_{k+1},\varphi_{k+2})$.
Let $\Theta$ be the cone-angle around $S$.
It is the sum of the dihedral angles of the six orthoschemes glued around it. As formula~\eqref{formcos} 
is symmetric for two variables, $\Theta$ is two times the sum of three different dihedral angles. A direct computation
gives ($k=1$ in the formula)
$$\cos(\Theta/2)=\textstyle 
\frac{\sinh\varphi_1\sinh\varphi_2\sinh\varphi_3-\sinh(\varphi_1+\varphi_2+\varphi_3)(\sinh\varphi_1\sinh\varphi_2+
\sinh\varphi_2\sinh\varphi_3+\sinh\varphi_3\sinh\varphi_1)}{\sinh(\varphi_1+\varphi_2)\sinh(\varphi_2+\varphi_3)\sinh(\varphi_3+\varphi_1)}. $$

During the computation we used that
$$\sinh(a+b)\sinh(b+c)-\sinh a \sinh c=\sinh b\sinh(a+b+c)  $$
which can be checked with
$\frac{1}{2}\left(\cosh(x+y)-\cosh(x-y) \right)=\sinh x\sinh y$. The analogous formula in the Euclidean convex polygons case
was obtained in \cite{kojimaal1}.

For example when $\varphi_i=\varphi \;\forall i$, we have
$$\cos(\Theta /2)=-\frac{2\cosh(\varphi)^2+\sinh(\varphi)^2}{2\cosh(\varphi)^3}. $$
The function on the right-hand side is a bijection from the positive numbers to $]-1,0[$, hence all
 the $\Theta\in ]2\pi,3\pi[$ (the dihedral angle $\theta\in ]\pi/3,\pi/2[$) are uniquely reached. 
In particular $\C$ is not an orbifold.

The cone-manifold $\C$ comes with an isometric involution which consists of reversing the order of the angles 
$(\varphi_1,\ldots,\varphi_n)$.

\section{Higher dimensional generalization}\label{sec:gen}

The generalization of $t$-convex polygons to higher dimensional Minkowski spaces is as follows.
Let us consider the $d$-dimensional hyperbolic space $\H^d$ as a pseudo-sphere in the $d+1$-dimensional
Minkowski space $M^{d+1}$, and let $\Gamma$ be a discrete group of linear isometry of $M^{d+1}$ such that
$\H^d/\Gamma$ is a compact hyperbolic manifold. A $\Gamma$-convex polyhedron is,
given $\eta_1,\ldots,\eta_n\in\H^d$ and positive numbers $h_1,\ldots,h_n$, the intersection of the
future sides of the space-like hyperplanes $(\gamma(h_i\eta_i))^{\bot}$ $\forall i, \forall \gamma\in\Gamma$.
The mixed-coarea is generalized as a ``mixed covolume''. For details and computation of the signature,
see \cite{FF}. Actually for a given set of $\eta_i$, many combinatorial types may appear, and one has to restrict 
to type cones (cones of polyhedra with parallel facets and same combinatorics). It should be
interesting to investigate the kind of spherical polytopes that  appear.

Another related question is to look at the quadratic form given by the face area 
of the polyhedra (in a fundamental domain) and its relations 
with the moduli spaces of flat metric with conical singularities of negative curvature on compact
surfaces of genus $>1$
(the quotient of the boundary of a $\Gamma$-convex
polyhedron is isometric to such a metric).

The analogous questions in the  convex polytopes  case  are the subject of \cite{FI}. The moduli space of
flat metrics on the sphere was studied in \cite{T}.

\section*{Acknowledgement}

The author thanks anonymous referee and Haruko Nishi who helped to imporve the redaction of the present text.
Up to trivial changes, the introduction was written by an anonymous referee. 
The polygons introduced in the present paper are very particular cases of objects studied in 
\cite{FF} and \cite{FV}.

Work supported by the ANR GR Analysis-Geometry.

% \begin{spacing}{0.9}
% \begin{footnotesize}
% \bibliographystyle{alpha}
% \bibliography{polymink}
% \end{footnotesize}
% \end{spacing}

\end{document}